 \newtheorem{thm}{}[section]
 \newtheorem{theorem}[thm]{Theorem}
 \newtheorem{corollary}[thm]{Corollary}
 \newtheorem{lemma}[thm]{Lemma}
 \newtheorem{proposition}[thm]{Proposition}
  \theoremstyle{definition}
 \theoremstyle{remark}
 \newtheorem{remark}[thm]{Remark}
 \newtheorem{problem}[thm]{Problem}
 \numberwithin{equation}{section}
\newcommand{\norm}[1]{\lVert#1\rVert}
\newcommand{\NN}{\ensuremath{\mathbb{N}}}
\newcommand{\xx}{\ensuremath{\mathbf{x}}}
\newcommand{\ee}{\ensuremath{\mathbf{e}}}
\newcommand{\ww}{\ensuremath{\mathbf{w}}}
\newcommand{\vv}{\ensuremath{\mathbf{v}}}
\newcommand{\WW}{\ensuremath{\mathcal{W}}}
\newcommand{\OO}{\ensuremath{\mathcal{O}}}
\newcommand{\EE}{\mathcal{E}}
\newcommand{\BB}{\mathcal{B}}
\newcommand{\HH}{\mathcal{H}}
\newcommand{\FF}{\mathbb{F}}
\newcommand{\FFF}{\mathcal{F}}
\newcommand{\ZZ}{\ensuremath{\mathbb{Z}}}
\newcommand{\supp}{\operatorname{supp}}
\newcommand{\range}{\operatorname{R}}
\newcommand{\dom}{\operatorname{D}}
\newcommand{\normiii}[1]{{\left\vert\mkern-1.8mu\left\vert\mkern-1.8mu\left\vert #1 
 \right\vert\mkern-1.8mu\right\vert\mkern-1.8mu\right\vert}}
\begin{document}

\title{$1$-Greedy renormings of Garling sequence  spaces}

\author[F. Albiac]{Fernando Albiac}
\address{Mathematics Department\\ 
Universidad P\'ublica de Navarra\\
Campus de Arrosad\'{i}a\\
Pamplona\\ 
31006 Spain}
\email{fernando.albiac@unavarra.es}

\author[J. L. Ansorena]{Jos\'e L. Ansorena}
\address{Department of Mathematics and Computer Sciences\\
Universidad de La Rioja\\ 
Logro\~no\\
26004 Spain}
\email{joseluis.ansorena@unirioja.es}

\author[B.  Wallis]{Ben Wallis}
\address{University of North Illinois\\ 
USA}

\subjclass[2010]{46B15, 41A65}

\keywords{subsymmetric basis,  greedy basis, renorming, Property (A), sequence spaces, superreflexivity, uniform convexity}

\begin{abstract} 
 We show that  all 
 Garling sequence  spaces admit a renorming with respect to which their standard unit vector basis is $1$-greedy. We also discuss some additional properties of these Banach spaces related to uniform convexity and superreflexivity. In particular, our approach to the study of the superreflexivity  of Garling sequence space provides an example of how  essentially non-linear tools  from greedy approximation can be used to shed light into the linear structure of the spaces.
\end{abstract} 

\thanks{Research partially supported by the Spanish Research Grant \textit{An\'alisis Vectorial, Multilineal y Aplicaciones}, reference number MTM2014-53009-P. F. Albiac also acknowledges the support of Spanish Research Grant \textit{Operators, lattices, and structure of Banach spaces},  with reference MTM2016-76808-P}

\maketitle

\section{Introduction and background}

\noindent A semi-normalized basis $(\xx_{n})_{n=1}^{\infty}$ of a Banach space $(X, \Vert\cdot\Vert)$ is  said to be $C$-\textit{greedy under renorming} ($C$-GUR, for short) if there is an equivalent norm $\normiii{\cdot}$ on $X$ (i.e., a renorming of $X$) with respect to which $(\xx_{n})_{n=1}^{\infty}$ is $C$-greedy, i.e., 
\[
\normiii { f-\sum_{n\in A}  a_n \xx_n}
\le C \normiii{ f-g}
\]
 for any $f=\sum_{n=1}^\infty a_n\xx_n\in X$, any $A\subseteq \NN$ finite such that  $|a_n|\ge  |a_k|$ whenever $n\in A$ and $k\in\NN\setminus A$, and any $g\in X$ with $|\supp(g)|\le|A|$.

A problem that goes back to \cite{AW2006} is to determine if  a given  (greedy) basis  is $1$-GUR.
For symmetric bases the answer to this problem  is positive and quite simple because $C$-symmetric bases are $C$-greedy and every symmetric basis becomes $1$-symmetric under a suitable renorming; thus  any symmetric basis is $1$-GUR.

For subsymmetric bases  the situation is  different. Taking into account the relation between the constants involved (see e.g. \cite{AlbiacKalton2016}*{Chapter 10}) one immediately sees that $1$-subsymmetric bases are always $2$-greedy.
 Hence, since any subsymmetric basis becomes $1$-subsymmetric under a suitable renorming, we have that any subsymmetric basis is $2$-GUR.  
 
 Let us now put our problem into context by summarizing its backgroung.  Albiac and Wojtaszczyk  exhibited in  \cite{AW2006}  an example of a $1$-subsymmetric basis that  is not $1$-greedy.    Later on,  Dilworth et al.\  constructed in  \cite{DOSZ2011} an example of a  subsymmetric basis which,  in spite of not being symmetric, was $1$-greedy.
 Therefore a natural question in the theory is to determine if a particular subsymmetric (and non-symmetric) basis is $1$-GUR.


Recently,  the authors have investigated in  \cite{Wallis2017} the geometric properties of  a class of Banach  spaces, called Garling sequence spaces,  in which the canonical basis is   subsymmetric but not symmetric. In this note we further the study of the greedy behavior of subsymmetric bases and investigate  Garling sequence spaces from the point of view of the greedy algorithm. To be precise 
in Section~\ref{Renorming} we prove  that  the canonical basis of Garling sequence spaces is  $1$-GUR. In  Section~\ref{Superreflexivity} we use the properties of the democracy functions of these spaces to give a necessary condition for them to be super-reflexive. In addition, we prove that Garling sequence spaces are never uniformly convex.

It is  worth pointing out that  investigating greedy renormings of non-sub\-symmetric bases is also of interest. Indeed,  the starting problem of  this theory,  posed in \cite{AW2006} and as of today still unsolved, is to determine if the  Haar system  in $L_p[0,1]$, $1<p<\infty$,
is a $1$-GUR basis. Recall that the Haar system in $L_p[0,1]$  is greedy \cite{Temlyakov1998} but it is not subsymmetric \cite{KadetsPel1962}.
The most significant advances  in the study of greedy renormings  of non-subsymmetric bases  were also achieved in \cite{DOSZ2011}. Here the authors found  examples of   non-subsymmetric  greedy bases which are not $1$-GUR
 (like the Haar basis in the dyadic Hardy space $H_1$ and the canonical basis of the Tsirelson space),  and of a  non-subsymmetric greedy  basis    which is  $1$-GUR (namely, the canonical basis of the space $\ell_2\oplus\ell_{2,1}$).
 
Throughout this article we use standard facts and notation from Banach spaces and approximation theory. We refer the reader to e.g. \cites{AlbiacKalton2016, LinTza1977, LinTza1979} for the necessary background. Next  we single out the notation that it is more heavily employed.
We will  denote by $\FF$ the real or complex field. We denote by
 $(\ee_k)_{k=1}^\infty$  the canonical basis of $\FF^\NN$, i.e.,  $\ee_k=(\delta_{k,n})_{n=1}^\infty$, were
$\delta_{k,n}=1$ if $n=k$ and $\delta_{k,n}=0$ otherwise.
The domain of a function $f$ will be denoted by $\dom(f)$, while $\range(f)$ denotes its range.
Given families of positive real numbers $(\alpha_i)_{i\in I}$ and $(\beta_i)_{i\in I}$, the symbol $\alpha_i\lesssim \beta_i$ for $i\in I$ means that $\sup_{i\in I}\alpha_i/\beta_i <\infty$, while $\alpha_i\approx \beta_i$ for $i\in I$ means that $\alpha_i\lesssim \beta_i$ and $\beta_i\lesssim \alpha_i$ for $i\in I$.


\section{Superreflexivity in Garling sequence spaces}\label{Superreflexivity}
 \noindent  Let us consider the  set of weights  
 \[
 \WW:=\left\{(w_n)_{n=1}^\infty\in c_0\setminus\ell_1:1=w_1\geq w_2>\cdots w_n \ge w_{n+1} \ge\cdots>0\right\}.\]
 Given $1\le p<\infty$ and $\ww=(w_n)_{n=1}^\infty\in\WW$ the \textit{Garling sequence space} $g(\ww,p)$ is defined as the Banach space consisting of all scalar sequences $f=(a_n)_{n=1}^\infty$ such that
\[
 \Vert  f  \Vert_{g(\ww,p)} = \sup_{\phi\in\OO_\infty } \left( \sum_{n=1}^\infty |a_{\phi(n)}|^p w_n \right)^{1/p},
 \] 
 where  $\OO_\infty$ denotes the set of all increasing functions from $\NN$ to $\NN$.
If $\ww$ and $p$ are clear from context, the norm of the space will be shortened to $\Vert\cdot\Vert_{g}$. The isomorphic structure of these  Banach spaces, which generalize an example of Garling from \cite{Garling1968}, has been recently  studied in \cite{Wallis2017}. 

Theorem~\ref{PreviousResults} below gathers a few properties of Garling sequence spaces that are of interest for the purposes of this paper.

Recall that given a basis  $\BB=(\xx_n)_{n=1}^\infty$ for a Banach space $X$, the \textit{lower democracy funtion} $(\varphi_l[\BB, X](m))_{m=1}^\infty$   and the \textit{upper democracy funtion} $(\varphi_u[\BB, X](m))_{m=1}^\infty$   of $\BB$ are defined, respectively,  by 
 \[
\varphi_l[\BB,X](m) =\inf_{|A|\ge m} \left\Vert \sum_{n\in A} \xx_n  \right\Vert,\]
and 
\[\varphi_u[\BB,X](m) =\sup_{|A|\le m} \left\Vert \sum_{n\in A} \xx_n  \right\Vert,
\]
and that  $\mathcal B$ is $\Delta$-democratic if and only if $\varphi_u[\BB, X](m)\le\Delta \varphi_l[\BB,X](m)$ for all $m\in\NN$. 

Recall also that a weight   $(w_n)_{n=1}^\infty$ is said to be  \textit{regular} if there is a constant
 $C\ge 1$ such that 
 \[
 \frac{1}{m} \sum_{n=1}^m w_n\le Cw_m, \quad m\in\NN.
 \]

\begin{theorem}[see \cite{Wallis2017}]\label{PreviousResults}  Let  $1\le p<\infty$ and $\ww=(w_n)_{n=1}^\infty\in \WW$. Then:
\begin{itemize}

\item[(i)]  The canonical basis $\EE=(\ee_n)_{n=1}^\infty$  is a $1$-subsymmetric 
basis of $g(\ww,p)$.

\item[(ii)]  If both $\ww$ and $(1/(nw_n))_{n=1}^\infty$ are regular weights then $\EE$ is not symmetric in $g(\ww,p)$.

\item[(iii)] $\varphi_l[\EE, g(\ww,p)](m) = \varphi_u[\EE, g(\ww,p)](m) = (\sum_{n=1}^m w_n)^{1/p}$  for all $m\in\NN$.

\item[(iv)] $g(\ww,p)$ is reflexive if and only if $p>1$.

\item[(v)] Any subsymmetric basis of $g(\ww,p)$ is equivalent to its canonical basis.

\item[(vi)] For every  $\varepsilon>0$ there is a  sublattice of $g(\ww,p)$ that is $(1+\varepsilon)$-lattice isomorphic to $\ell_p$ and 
$(1+\varepsilon)$-lattice complemented in $g(\ww,p)$.


\end{itemize}
\end{theorem}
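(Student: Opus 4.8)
Since the six assertions are imported from \cite{Wallis2017}, one may of course simply cite that source; here I describe how I would recover each item directly from the formula defining $\Vert\cdot\Vert_g$. I would begin with (i) and (iii), which lie closest to the definition. The decisive feature is that $\Vert f\Vert_g$ is a supremum over all increasing reindexings $\phi\in\OO_\infty$, so it only ``sees'' the values of $f$ together with their relative order, paired against the fixed nonincreasing weights $(w_n)$. For (i), since a composition of increasing functions is again increasing and the formula involves only the moduli $|a_n|$, spreading the support along any increasing map and inserting signs leaves the defining supremum unchanged, which gives $1$-subsymmetry outright. For (iii), if $|A|=m$ then $\Vert\sum_{n\in A}\ee_n\Vert_g=\sup_\phi(\sum_{n:\,\phi(n)\in A}w_n)^{1/p}$, and because $(w_n)$ is nonincreasing the optimal $\phi$ enumerates $A$ in increasing order, pairing the $m$ heaviest weights $w_1,\dots,w_m$ with the support; the resulting value $(\sum_{n=1}^m w_n)^{1/p}$ is independent of the choice of $A$, so the upper and lower democracy functions coincide.

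For (iv) I would use that a subsymmetric basis is unconditional, so that by the James-type dichotomy reflexivity is equivalent to $\EE$ being simultaneously boundedly complete and shrinking, i.e.\ to the exclusion of both $c_0$ and $\ell_1$. Since the fundamental function $(\sum_{n=1}^m w_n)^{1/p}$ diverges (because $\ww\notin\ell_1$), the basis is boundedly complete for every $p$, so no copy of $c_0$ arises; reflexivity is then governed purely by the shrinking property, which holds for $p>1$ (here the $p$-convexity inherent in the sup-definition forces the dual basis to norm the space) and fails for $p=1$, where a copy of $\ell_1$ reappears. For (vi) I would pass to a normalized block basis $x_k=\varphi(n_k)^{-1}\sum_{i\in B_k}\ee_i$ supported on consecutive blocks $B_k$ of rapidly increasing length $n_k$; a direct estimate of the defining supremum, exploiting that $\varphi^p(N_k)-\varphi^p(N_{k-1})\approx\varphi^p(n_k)$ once $n_k$ dwarfs the earlier indices, shows $\Vert\sum_k a_k x_k\Vert_g\to(\sum_k|a_k|^p)^{1/p}$ as the lengths grow. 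This yields a $(1+\varepsilon)$-lattice copy of $\ell_p$, and since the $x_k$ are disjointly supported positive vectors the associated averaging projection onto their closed linear span is bounded with norm tending to $1$, providing the complementation.

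The two items carrying the real content are (ii) and (v). For (ii) I would produce, using the regularity of $\ww$ and of $(1/(nw_n))$, a single vector admitting two rearrangements whose norms differ by an arbitrarily large factor: these regularity hypotheses are exactly what pin down the growth of the partial sums $\frac1m\sum_{n=1}^m w_n$ relative to $w_m$, and thereby manufacture the required gap between a monotone and a non-monotone arrangement. The step I expect to be the genuine obstacle is (v), the uniqueness up to equivalence of the subsymmetric basis. My plan would be to combine (iii) with a rigidity principle: the fundamental function $m\mapsto(\sum_{n=1}^m w_n)^{1/p}$ is a complete isomorphic invariant for subsymmetric bases of $g(\ww,p)$, so any normalized subsymmetric basic sequence spanning the space must have an equivalent fundamental function, after which a gliding-hump argument upgrades this to full equivalence of the bases. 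Ensuring that the gliding-hump perturbation can be carried out with uniform, rather than merely asymptotic, control is where the bulk of the technical effort resides, and it is the part I would develop most carefully.
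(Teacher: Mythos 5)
The paper itself gives no proof of this theorem: it is imported wholesale from \cite{Wallis2017}, so your opening remark that one may simply cite that source already reproduces the paper's entire argument, and on that score you match it exactly. Your supplementary sketches go beyond what the paper does, and they are of uneven completeness. Items (i) and (iii) are essentially full proofs: the norm depends only on the moduli of the coordinates and their relative order paired against the nonincreasing weight, which yields $1$-subsymmetry, and the optimal increasing enumeration of a set $A$ with $|A|=m$ pairs it with $w_1,\dots,w_m$, giving the common value $(\sum_{n=1}^m w_n)^{1/p}$ of both democracy functions (for $\varphi_l$ one also uses that $k\mapsto(\sum_{n=1}^k w_n)^{1/p}$ is nondecreasing to pass from $|A|\ge m$ back to $|A|=m$). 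The remaining items are plans rather than proofs: in (iv) the phrase ``the $p$-convexity inherent in the sup-definition forces the dual basis to norm the space'' is not an argument for the shrinking property when $p>1$, and that is the only nontrivial direction once the $p=1$ case is dispatched via the complemented copy of $\ell_1$ from (vi); in (v) the assertion that the fundamental function is a complete isomorphic invariant for subsymmetric bases of $g(\ww,p)$ is precisely the nontrivial content to be established, not a principle available to invoke, and you rightly flag the uniform control in the gliding-hump step as the real work; (ii) and (vi) are plausible outlines whose quantitative steps (the explicit rearranged vector witnessing non-symmetry, and the $(1+\varepsilon)$-complementation) remain to be carried out. Since the paper delegates all of this to \cite{Wallis2017}, none of these gaps conflicts with the paper, but they would need to be filled before your sketches could replace the citation.
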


Let us get started by  using the democracy functions to obtain some embedding results.
\begin{proposition}\label{Embedding} Let  $1\le p<\infty$. Let  $\vv=(v_n)_{n=1}^\infty$,
$\ww=(w_n)_{n=1}^\infty\in\WW$ with $\ww$ regular.
Then  $g_p(\ww)\subseteq g_p(\vv)$ if and only if 
$v_n\lesssim w_n$ for $n\in\NN$.
\end{proposition}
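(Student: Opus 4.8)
The plan is to establish the two implications separately. The forward implication (pointwise domination of the weights forces the inclusion) is immediate from the defining supremum and needs neither the regularity of $\ww$ nor the democracy functions; the reverse implication is where the real content lies and rests on Theorem~\ref{PreviousResults}(iii) together with the regularity hypothesis.

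First I would dispatch the easy direction. Assume $v_n\lesssim w_n$, say $v_n\le C w_n$ for all $n$. Then for every scalar sequence $f=(a_n)_{n=1}^\infty$ and every $\phi\in\OO_\infty$ we have $\sum_{n=1}^\infty|a_{\phi(n)}|^p v_n\le C\sum_{n=1}^\infty|a_{\phi(n)}|^p w_n\le C\Vert f\Vert_{g(\ww,p)}^p$; taking the supremum over $\phi\in\OO_\infty$ gives $\Vert f\Vert_{g(\vv,p)}\le C^{1/p}\Vert f\Vert_{g(\ww,p)}$, so in particular $g(\ww,p)\subseteq g(\vv,p)$.

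For the converse, suppose $g(\ww,p)\subseteq g(\vv,p)$. Both are Banach spaces on which every coordinate functional is continuous, so the inclusion map has closed graph and is therefore bounded: there is $C$ with $\Vert f\Vert_{g(\vv,p)}\le C\Vert f\Vert_{g(\ww,p)}$ for all $f$. Testing this on $f=\sum_{n\in A}\ee_n$ with $|A|=m$ and invoking Theorem~\ref{PreviousResults}(iii) for both spaces --- which, since the upper and lower democracy functions coincide, forces $\Vert\sum_{n\in A}\ee_n\Vert_{g(\vv,p)}=(\sum_{n=1}^m v_n)^{1/p}$ and $\Vert\sum_{n\in A}\ee_n\Vert_{g(\ww,p)}=(\sum_{n=1}^m w_n)^{1/p}$ for any $A$ of cardinality $m$ --- I obtain
\[
\sum_{n=1}^m v_n\le C^p\sum_{n=1}^m w_n,\qquad m\in\NN.
\]

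Finally I would pass from this summed inequality to the pointwise estimate. Since $\vv\in\WW$ is non-increasing we have $m\, v_m\le\sum_{n=1}^m v_n$, and since $\ww$ is regular there is $C'$ with $\sum_{n=1}^m w_n\le C'\, m\, w_m$. Chaining these with the inequality above yields
\[
v_m\le\frac1m\sum_{n=1}^m v_n\le\frac{C^p}{m}\sum_{n=1}^m w_n\le C^p C' w_m,
\]
that is, $v_n\lesssim w_n$. The crux is exactly this last passage: a bound on the partial sums $\sum_{n=1}^m v_n\lesssim\sum_{n=1}^m w_n$ does not by itself force the termwise bound $v_m\lesssim w_m$, and it is the regularity of $\ww$ --- which permits replacing the average of $\ww$ over the first $m$ terms by its $m$-th value --- combined with the monotonicity built into the class $\WW$ that makes the implication valid.
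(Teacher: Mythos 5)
Your proof is correct and follows essentially the same route as the paper's: continuity of the inclusion, comparison of the democracy functions via Theorem~\ref{PreviousResults}~(iii), and then the chain $v_m\le\frac1m\sum_{n=1}^m v_n\lesssim\frac1m\sum_{n=1}^m w_n\lesssim w_m$ using monotonicity of $\vv$ and regularity of $\ww$. You merely make explicit two steps the paper leaves implicit (the closed-graph argument for boundedness and the verification of the easy direction).
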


  \begin{proof}  If $g_p(\ww)\subseteq g_p(\vv)$ the embedding is continuous and so 
  \[
  \varphi_u[\EE, g(\vv,p)](m) \lesssim   \varphi_u[\EE, g(\ww,p)](m), \quad m\in\NN.
  \]
Appealing to Theorem~\ref{PreviousResults}~(iii) we get
  \[
  v_m 
  \le \frac{1}{m}\sum_{n=1}^m  v_n
 \lesssim 
  \frac{1}{m}\sum_{n=1}^m  w_n  
   \lesssim w_m, \quad m\in\NN.
 \]
The converse is obvious. 
  \end{proof}
 
  

  \begin{corollary} Let  $1\le p<\infty$. Let  $\vv=(v_n)_{n=1}^\infty$,
$\ww=(w_n)_{n=1}^\infty\in\WW$.
  \begin{itemize}
 \item[(i)]   $g_p(\vv) \approx g_p(\ww)$ if and only if  $g_p(\vv) =  g_p(\ww)$.

\item[(ii)]  Assume that both $\vv$ and $\ww$ are regular and that $g_p(\vv) =  g_p(\ww)$. Then
   $v_n \approx w_n$ for $n\in\NN$.
  \end{itemize}
  \end{corollary}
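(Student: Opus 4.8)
The plan is to derive both parts from results already in hand: part~(i) from the uniqueness of subsymmetric bases recorded in Theorem~\ref{PreviousResults}~(v), and part~(ii) from two applications of Proposition~\ref{Embedding}.

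For part~(i), the implication $g_p(\vv)=g_p(\ww)\Rightarrow g_p(\vv)\approx g_p(\ww)$ is immediate, since equality as sets forces the identity map to be an isomorphism. For the converse I would start from an isomorphism $T\colon g_p(\vv)\to g_p(\ww)$. Since the canonical basis $\EE$ of $g_p(\vv)$ is subsymmetric by Theorem~\ref{PreviousResults}~(i), the image sequence $(T\ee_n)_{n=1}^\infty$ is a basis of $g_p(\ww)$ equivalent to $\EE$, and subsymmetry is preserved under equivalence of bases; hence $(T\ee_n)_{n=1}^\infty$ is a subsymmetric basis of $g_p(\ww)$. Theorem~\ref{PreviousResults}~(v) then forces $(T\ee_n)_{n=1}^\infty$ to be equivalent to the canonical basis of $g_p(\ww)$. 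Chaining the two equivalences, the canonical basis of $g_p(\vv)$ is equivalent to the canonical basis of $g_p(\ww)$.

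The final step of part~(i) is to translate this equivalence of canonical bases into equality of the spaces as sets. Both bases are the same formal sequence $(\ee_n)_{n=1}^\infty$ of coordinate vectors, so their equivalence yields constants $0<c\le C$ with $c\,\Vert f\Vert_{g_p(\vv)}\le\Vert f\Vert_{g_p(\ww)}\le C\,\Vert f\Vert_{g_p(\vv)}$ for every finitely supported $f$. Passing to the supremum defining the Garling norm, these inequalities extend to arbitrary scalar sequences, so a sequence has finite $g_p(\vv)$-norm if and only if it has finite $g_p(\ww)$-norm. Thus $g_p(\vv)=g_p(\ww)$. Part~(ii) is then a direct consequence of Proposition~\ref{Embedding}: from $g_p(\vv)=g_p(\ww)$ we read off the inclusions $g_p(\ww)\subseteq g_p(\vv)$ and $g_p(\vv)\subseteq g_p(\ww)$; applying the proposition to the first inclusion (whose included space $g_p(\ww)$ has regular weight $\ww$ by hypothesis) gives $v_n\lesssim w_n$, and applying it to the second (with the roles of $\vv$ and $\ww$ interchanged, using the regularity of $\vv$) gives $w_n\lesssim v_n$. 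Together these yield $v_n\approx w_n$ for $n\in\NN$.

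The main obstacle is the single conceptual step in part~(i): recognising that an \emph{arbitrary} isomorphism between the two spaces, which need not respect the canonical bases, nonetheless carries the canonical basis of $g_p(\vv)$ to a subsymmetric basis of $g_p(\ww)$, so that the rigidity in Theorem~\ref{PreviousResults}~(v) applies. Once this is in place the remaining arguments are routine: the preservation of subsymmetry under basis equivalence is standard, and the extension of the norm inequalities from finitely supported sequences to all sequences follows directly from the supremum form of the Garling norm.
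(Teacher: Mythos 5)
Your proposal is correct and follows essentially the same route as the paper, which simply cites Theorem~\ref{PreviousResults}~(v) for part~(i) and Proposition~\ref{Embedding} for part~(ii); you have merely filled in the details (transporting the canonical basis through the isomorphism to obtain a subsymmetric basis of the target space, and applying the embedding proposition in both directions). No gaps.
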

  \begin{proof} 
  (i) is a consequence of Theorem~\ref{PreviousResults}~(v), and (ii) is straightforward from 
  Proposition~\ref{Embedding}.
  \end{proof}


\begin{proposition}\label{NoUniformConvex}The space $g(\ww,p)$ fails to be uniformly convex for any $1\leq p<\infty$ and any $\ww\in\WW$.\end{proposition}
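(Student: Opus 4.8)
The strategy is to exhibit, for each $\varepsilon>0$, a pair of norm-one vectors whose midpoint has norm arbitrarily close to $1$, thereby violating uniform convexity directly from the definition. The natural candidates come from the structural dichotomy of the space: because the weights $\ww$ are eventually very small (they lie in $c_0$), tails of the space behave like $\ell_1$, and $\ell_1$ is the canonical non-uniformly-convex space.

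More concretely, I would first fix an index $N$ and consider two vectors supported on disjoint, far-out blocks of coordinates, say $f=\ee_n$ and $g=\ee_m$ for large distinct $n,m$. By $1$-subsymmetry (Theorem~\ref{PreviousResults}~(i)) each canonical basis vector is normalized, and by part (iii) the norm of a sum of $k$ disjointly supported unit vectors equals $(\sum_{j=1}^k w_j)^{1/p}$ regardless of which coordinates are used. Thus $\norm{\ee_n}_g=\norm{\ee_m}_g=1$ while $\norm{\ee_n+\ee_m}_g=(w_1+w_2)^{1/p}=(1+w_2)^{1/p}$. Since this is strictly larger than $1$ (as $w_2>0$), this fixed pair does not yet give the desired asymptotic, so the midpoint estimate must instead exploit that the \emph{relevant} weights can be pushed toward zero.

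The fix is to compare a long block against a single new vector placed in the tail. I would set $f=\big(\sum_{j=1}^{k}\ee_{i_j}\big)/\varphi(k)$, where $\varphi(k)=(\sum_{n=1}^k w_n)^{1/p}$ and $i_1<\cdots<i_k$, and let $g$ be the same block with one coordinate replaced by a tail coordinate. The key computation is the difference $\norm{f+g}_g-\norm{f}_g-\norm{g}_g$; using (iii) together with the monotonicity and summability of $\ww$, the marginal contribution of a single extra tail coordinate to the $p$-th power of the norm is a single small weight $w_{k+1}$, while the bulk contribution grows like $\sum_{n=1}^k w_n$. Because $\ww\notin\ell_1$ the denominator $\varphi(k)^p=\sum_{n=1}^k w_n\to\infty$, so the relative size of the perturbation vanishes as $k\to\infty$, forcing $\tfrac12\norm{f+g}_g\to1$ while $\norm{f-g}_g$ stays bounded below.

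The main obstacle is bookkeeping the exact value of $\norm{\cdot}_g$ on these non-disjointly-supported combinations: part (iii) only pins down the democracy function on sums of basis vectors, whereas $f+g$ has one coordinate with coefficient $2$ and the rest with coefficient $1$. I would handle this by choosing the two vectors to have \emph{identical} large blocks and to differ only in one tail coordinate, so that $f+g$ and $f-g$ are again (scaled) sums of distinct unit vectors whose norms are computed exactly by (iii); the optimal rearrangement $\phi\in\OO_\infty$ in the definition of $\norm{\cdot}_g$ then simply matches the largest coefficients to the largest weights. Once the combinatorics of that rearrangement is made explicit, the inequality $1-\delta<\tfrac12\norm{f+g}_g$ with $\norm{f-g}_g\not\to0$ follows from $\lim_k w_{k+1}/\sum_{n=1}^k w_n=0$, which is precisely the content of $\ww\in c_0\setminus\ell_1$.
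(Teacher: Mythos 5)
Your first computation is fine, and your diagnosis of why the pair $\ee_n,\ee_m$ fails is correct; but the ``fix'' you propose has a genuine gap that kills the argument. You take $f=\varphi(k)^{-1}\sum_{j=1}^{k}\ee_{i_j}$ and let $g$ be the same normalized constant-coefficient block with one coordinate moved to a fresh tail position. Then $f-g$ is supported on exactly two coordinates, each with coefficient $\pm\varphi(k)^{-1}$, so by $1$-unconditionality and Theorem~\ref{PreviousResults}~(iii) we get $\norm{f-g}_{g}=(w_1+w_2)^{1/p}/\varphi(k)\to 0$, since $\varphi(k)^p=\sum_{n=1}^k w_n\to\infty$ precisely because $\ww\notin\ell_1$. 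Your claim that ``$\norm{f-g}_g$ stays bounded below'' is therefore false for this construction: a sequence of pairs with $\tfrac12\norm{f+g}\to 1$ but $\norm{f-g}\to 0$ is perfectly consistent with uniform convexity, so no contradiction is reached. The problem is structural: by scaling the \emph{entire} vector by $\varphi(k)^{-1}$ you also scale down the one coordinate in which the two vectors differ, which is exactly the part that must carry a non-vanishing amount of norm in the difference.

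The paper's proof repairs precisely this defect by decoupling the two scales. It puts the \emph{maximal} coefficient $1$ on the single differing coordinate (positions $j+1$ and $j+2$ for $u^{(j)}$ and $v^{(j)}$ respectively) and shrinks only the common block, using coefficients $\alpha_j=\bigl((1-w_{j+1})/\sum_{n=1}^j w_n\bigr)^{1/p}\to 0$ tuned so that $\norm{u^{(j)}}_g=\norm{v^{(j)}}_g=1$ (this normalization itself requires passing to a subsequence with $\alpha_{j_k}\le\min_{i\le j_k}\alpha_i$). Then $u^{(j)}-v^{(j)}=\ee_{j+1}-\ee_{j+2}$ has norm $(w_1+w_2)^{1/p}>1$, \emph{independent of} $j$, because the optimal increasing map in the Garling norm matches these two coordinates with the largest weights $w_1,w_2$; meanwhile in $\tfrac12(u^{(j)}+v^{(j)})$ the two unit coordinates sit behind the block and get matched with the small weights $w_{j+1},w_{j+2}$, so the block's contribution $\alpha_j^p\sum_{n=1}^j w_n=1-w_{j+1}$ dominates and the midpoint's norm tends to $1$. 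That asymmetry between where a coordinate sits in the sum versus in the difference is the essential mechanism, and it is absent from your construction. To salvage your approach you would have to keep the differing coordinate's coefficient fixed at the maximum while renormalizing only the block --- which is exactly the paper's choice of $\alpha_j$.
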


\begin{proof}For $j\in\NN$ put
\[\alpha_j=\left(\frac{1-w_{j+1}}{\sum_{n=1}^jw_n}\right)^{1/p},\]
and consider the vectors
\[u^{(j)}=(\underbrace{\alpha_j,\cdots,\alpha_j}_{j\text{ times}},1,0,0,0,\cdots) \]
and
\[v^{(j)}=(\underbrace{\alpha_j,\cdots,\alpha_j}_{j\text{ times}},0,1,0,0,0,\cdots).\]
Observe that
\begin{align*}\frac{1}{2}\norm{u^{(j)}+v^{(j)}}_{g}
&\geq\frac{1}{2}\left[\sum_{n=1}^{j}(2\alpha_{j})^pw_n+w_{j+1}+w_{j+2}\right]^{1/p}\\
&=\left[\alpha_j^p\sum_{n=1}^jw_n+\frac{1}{2^p}(w_{j+1}+w_{j+2})\right]^{1/p}\\
&=\left[1-w_{j+1}+\frac{1}{2^p}(w_{j+1}+w_{j+2})\right]^{1/p}\\
&:=N_j.
\end{align*}
Since $\lim_j N_j=1$, to show that $g(\ww,p)$ fails to be uniformly convex, it suffices to find an increasing sequence of integers $(j_k)_{k=1}^\infty$ such that $\norm{u^{(j_k)}}_{g}=\Vert v^{(j_k)}\Vert_{g}=1$ and $\Vert u^{(j_k)}-v^{(j_k)}\Vert_{g}>1$ for all $k\in\NN$.

Due to $(w_j)_{j=1}^\infty\in c_0\setminus\ell_1$ we have
\[\lim_{j\to\infty}\alpha_j=0.\]
Hence, we could find a subsequence $(\alpha_{j_k})_{k=1}^\infty$ such that
\[\alpha_{j_k}\leq\min_{i\leq j_k}\alpha_i,\;\;\;k\in\NN.\]

Now, fix any $k\in\NN$.  By definition of $g(w,p)$ and due to $w_1=1$, either $\norm{u^{(j_k)}}_{g}=1$, or else we could find $i\in\{1,\cdots,j_k\}$ with
\[1\leq\norm{u^{(j_k)}}_{g}^p=\sum_{n=1}^iw_n\alpha_{j_k}^p+w_{i+1}\leq\sum_{n=1}^iw_n\alpha_i^p+w_{i+1}=1\]
so that $\norm{u^{(j_k)}}_{g}=\norm{v^{(j_k)}}_{g}=1$ anyway.  Observing that
\[\norm{u^{(j_k)}-v^{(j_k)}}_{g}=(w_1+w_2)^{1/p}>1\]
finishes the proof.\end{proof}

Enflo proved in \cite{Enflo1973} that a Banach space is superreflexive if and only if it is uniformly convex under a suitable renorming.
Having shown that $g(\ww,p)$ is never uniformly convex, and in light of the above identification between superreflexivity and uniform convexifiability, the next natural question to ask is: Given $1 < p < \infty$, can we ever choose $\ww\in\WW$ so that $g(\ww,p)$ is superreflexive? 

We tackle this issue by using well-known properties of the democracy functions of bases in Banach spaces. Following \cite{DKKT2003} we say 
that a sequence $(s_n)_{n=1}^\infty$  of positive numbers has the \textit{lower regularity property} (LRP for short) if  there is  an integer $r\ge 2$ with
\[
s_{rn}\ge 2 s_n, \quad n\in\NN.
\]
Our next Proposition establishes the close relation between a weight $(w_n)_{n=1}^\infty$ being regular and its \textit{primitive weight} $(s_n)_{n=1}^\infty$ given by
$s_n=\sum_{k=1}^n w_k$ having the LRP. Recall that $(w_n)_{n=1}^\infty$ is 
\textit{essentially decreasing} if there is a constant $C\ge 1$ with   
 $w_k\le Cw_n$ for $k\ge n$.

\begin{proposition}\label{result:3} Let $(s_n)_{n=1}^\infty$ be the primitive weight of an essentially decreasing weight $(w_n)_{n=1}^\infty$. The following are equivalent.
\begin{itemize}

\item[(a)] There is $C>1$ such that $s_{2n}\ge C s_n$ for all $n\in\NN$.

\item[(b)] For every $C>1$ there is $r\in\NN$ with $s_{rn}\ge C s_n$ for all $n\in\NN$.

\item[(c)] $(s_n)_{n=1}^\infty$ has the LRP.

\item[(d)] There is $C>1$ and $r\in\NN$  with  $s_{rn}\ge C s_n$ for all $n\in\NN$.

\item[(e)]  There exists $a>0$ such that 
$(n^{-a} s_n)_{n=1}^\infty$ is essentially increasing.

\item[(f)]  
$(n^{-1} s_n)_{n=1}^\infty$ is a regular weight.

\item[(g)]  $(w_n)_{n=1}^\infty$ is a regular weight.

\end{itemize}

\end{proposition}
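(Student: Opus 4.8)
The plan is to establish the seven conditions through a single cycle of implications, separating the elementary \emph{self‑improving} manipulations among the growth conditions (b)--(f)---which use nothing about $(w_n)$ beyond positivity---from the two junctions where the essential decrease of $(w_n)$ is genuinely needed, namely the insertion of (a) and of (g). I would begin with the purely combinatorial part. Iterating any inequality of the form $s_{rn}\ge C s_n$ gives $s_{r^k n}\ge C^k s_n$, so that (a)$\Rightarrow$(b) and (c)$\Rightarrow$(b) follow by choosing $k$ with $C^k$ exceeding the target constant and replacing $r$ by $r^k$, while the reverse specializations (b)$\Rightarrow$(c), (c)$\Rightarrow$(d) and (a)$\Rightarrow$(d) are immediate upon taking $C=2$ or $r=2$.

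Next come the reformulations involving (e) and (f). For (d)$\Rightarrow$(e) I would fix $a:=\min\{\log_r C,\tfrac12\}\in(0,1)$, so that $s_{rn}\ge r^{a}s_n$, and for $m\le n$ locate $k$ with $r^{k-1}m<n\le r^{k}m$; using $s_{r^{k-1}m}\ge C^{k-1}s_m\ge r^{a(k-1)}s_m$ one checks $n^{-a}s_n\ge r^{-a}\,m^{-a}s_m$, which is exactly essential monotonicity of $(n^{-a}s_n)$. For (e)$\Rightarrow$(f), writing $t_n:=n^{-1}s_n=n^{-(1-a)}u_n$ with $u_n:=n^{-a}s_n$ essentially increasing, and summing against $\sum_{n\le m}n^{-(1-a)}\approx m^{a}$ (it is precisely $0<a<1$ that makes this partial sum comparable to $m^{a}$), yields $\sum_{n\le m}t_n\lesssim u_m m^{a}=s_m$, which is (f).

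The crux, and the step I expect to be the main obstacle, is the return (f)$\Rightarrow$(c): recovering genuine geometric growth from the merely \emph{averaged} condition (f). The naive attempts---bounding $\sum_{n\le m}t_n$ below by a constant multiple of $m t_m$, or comparing $t_m$ with $t_{m/2}$---only reproduce the trivial $s_{rm}\gtrsim s_m$ with a constant $\le 1$, because (f) is an upper bound on an average. The device that breaks the deadlock is the Fubini/Abel rearrangement $\sum_{n=1}^m s_n/n=\sum_{k=1}^m w_k\sum_{n=k}^m 1/n=\sum_{k=1}^m w_k\,(H_m-H_{k-1})$, where $H_m:=\sum_{j=1}^m 1/j$: the logarithmic coefficients $H_m-H_{k-1}\ge\log(m/k)$ accumulate weight on the small indices. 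Restricting to $k\le M_0:=\lfloor m e^{-A}\rfloor$, where $H_m-H_{k-1}\ge A$, gives $A\,s_{M_0}\le\sum_{n\le m}t_n\le C s_m$; taking $A=2C$ yields $s_m\ge 2 s_{\lfloor m e^{-2C}\rfloor}$, and setting $m=rn$ with $r=\lceil e^{2C}\rceil$ (so that $\lfloor rn e^{-2C}\rfloor\ge n$ and hence $s_{\lfloor rne^{-2C}\rfloor}\ge s_n$) this reads $s_{rn}\ge 2 s_n$, i.e. (c). Notably this argument, like the earlier ones, needs nothing about $(w_n)$, so (b)--(f) are already equivalent before essential monotonicity is invoked.

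Finally I would bring (a) and (g) into the circle, which is the only place the essential decrease of $(w_n)$ (with constant $D$) is used. For (g)$\Rightarrow$(a): (g) gives $w_{2n}\ge s_{2n}/(2C''n)$ while essential decrease gives $\sum_{n<k\le 2n}w_k\ge n w_{2n}/D$, whence $s_{2n}\ge s_n+s_{2n}/(2C''D)$ and so $s_{2n}\ge C's_n$ with $C'>1$. Conversely (a)$\Rightarrow$(g): $(C-1)s_n\le s_{2n}-s_n=\sum_{n<k\le 2n}w_k\le nD w_n$, that is $s_n\le \tfrac{D}{C-1}n w_n$. Since (a)$\Rightarrow$(d) is trivial, it only remains to route the already‑established chain back to (a); this I would do by the block comparison $\sum_{2n<k\le rn}w_k\le (r-2)D^2\sum_{n<k\le 2n}w_k$ (again essential decrease), which upgrades any $s_{rn}\ge C s_n$ to $s_{2n}\ge C's_n$. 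The only point I would recheck carefully is the small‑index bookkeeping in the (f)$\Rightarrow$(c) step, namely that $M_0\ge 1$ and $r e^{-2C}\ge 1$ hold for all $n\ge 1$, since that is where an off‑by‑one could slip in.
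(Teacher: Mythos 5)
Your proof is correct, but it takes a much more self-contained route than the paper's. The paper settles the equivalence of (a)--(f) in a single line by citing Theorem 1 of Altshuler and Lemma 2.12 of \cite{AA2016}, and then writes out only the one remaining implication into (g); that step is done exactly as in your (a)$\Rightarrow$(g): from $s_{rn}-s_n=\sum_{n<k\le rn}w_k\le (r-1)Dnw_n$ together with $s_{rn}-s_n\ge (C-1)s_n$ one gets $s_n\le \frac{D(r-1)}{C-1}\,nw_n$. Everything else you prove from scratch. The two substantive pieces you supply that the paper outsources are the implication (f)$\Rightarrow$(c), via the rearrangement $\sum_{n\le m}s_n/n=\sum_{k\le m}w_k\sum_{j=k}^{m}j^{-1}$ and the bound $\sum_{j=k}^{m}j^{-1}\ge\log(m/k)$ (the bookkeeping you flagged is fine: with $A=2C$ and $r=\lceil e^{2C}\rceil$ one has $re^{-2C}\ge 1$, hence $\lfloor rne^{-2C}\rfloor\ge n\ge 1$), and the return (d)$\Rightarrow$(a) via the block comparison $\sum_{2n<k\le rn}w_k\le (r-2)D^{2}\sum_{n<k\le 2n}w_k$; both check out, as do your (d)$\Rightarrow$(e)$\Rightarrow$(f) and (g)$\Rightarrow$(a). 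Your observation that (b)--(f) are equivalent for an arbitrary positive weight, essential decrease being needed only to attach (a) and (g) to the cycle, is a genuine refinement that the citation-based proof does not make visible. The paper's route buys brevity; yours buys a complete elementary argument and a slightly sharper statement.
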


\begin{proof}Taking into account \cite{Altshuler1975}*{Theorem 1} and
\cite{AA2016}*{Lemma 2.12}, we must only prove (a) $\Rightarrow$ (g). Assume that $s_{rn}\ge C  s_n$ for some $C>1$, some $r\ge 2$ and all $n\in\NN$.  Let $D=\sup_{k\le n} w_n/w_k$.
We have
\[
\frac{nw_n}{s_n}\ge  \frac{1}{D(r-1)} \frac{s_{rn}-s_n}{s_n}\ge \frac{C-1}{D(r-1)}
\]
for all $n\in\NN$.
\end{proof}

\begin{lemma}\label{convexity}
Let $1\le p<\infty$ and $\ww\in\WW$.  Then $g(\ww,p)$ is $p$-convex and it is not $q$-convex for any
$q>p$.
\end{lemma}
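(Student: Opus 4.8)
The plan is to treat $g(\ww,p)$ as a Banach lattice under the coordinatewise order (legitimate, since $\Vert f\Vert_{g}$ depends only on $(|a_n|)_n$ and is coordinatewise monotone) and to split the statement into an upper estimate ($p$-convexity with constant $1$) and a lower estimate (failure of $q$-convexity for $q>p$).

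For the $p$-convexity I would display the norm as a supremum of weighted $\ell_p$ seminorms. For each $\phi\in\OO_\infty$ set $N_\phi(f)=\left(\sum_{n=1}^\infty |a_{\phi(n)}|^p w_n\right)^{1/p}$, so that $\Vert f\Vert_{g}=\sup_\phi N_\phi(f)$. Each $N_\phi$ is the pullback of a weighted $\ell_p$ norm along the subsequence-restriction map $f\mapsto (a_{\phi(n)})_n$, which is a lattice homomorphism; hence a bare interchange of summation (Tonelli) gives, for any finite family $x_1,\dots,x_k\in g(\ww,p)$,
\[
N_\phi\!\left(\Big(\sum_{i} |x_i|^p\Big)^{1/p}\right)^p=\sum_{i} N_\phi(x_i)^p.
\]
Taking $p$-th roots, using $N_\phi(x_i)\le\Vert x_i\Vert_{g}$, and passing to the supremum over $\phi$ then yields
\[
\left\Vert\Big(\sum_i|x_i|^p\Big)^{1/p}\right\Vert_{g}=\sup_\phi\Big(\sum_i N_\phi(x_i)^p\Big)^{1/p}\le\Big(\sum_i\Vert x_i\Vert_{g}^p\Big)^{1/p},
\]
which is exactly $p$-convexity with constant $1$.

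For the failure of $q$-convexity when $q>p$ I would first observe that the crude test coming from the democracy function is \emph{not} enough: applying the $q$-convexity inequality to disjointly supported basis vectors $\ee_{k_1},\dots,\ee_{k_N}$ and invoking Theorem~\ref{PreviousResults}(iii) only forces $\big(\sum_{n=1}^N w_n\big)^{1/p}\lesssim N^{1/q}$, and for weights such as $w_n=n^{-1/2}$ this inequality holds for a whole range of $q>p$. Instead I would exploit the $\ell_p$-structure. By Theorem~\ref{PreviousResults}(vi), $g(\ww,p)$ contains a sublattice lattice isomorphic to $\ell_p$. Since $q$-convexity passes to sublattices with no worse constant and is preserved, up to the isomorphism constant, under lattice isomorphisms, it suffices to recall that $\ell_p$ is not $q$-convex for any $q>p$: testing the inequality on $\ee_1,\dots,\ee_N$ in $\ell_p$ forces $N^{1/p}\le M N^{1/q}$, which fails for large $N$ because $1/p>1/q$. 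Hence $g(\ww,p)$ cannot be $q$-convex for any $q>p$.

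The main obstacle is the lower estimate, not the upper one: the $p$-convexity bound is essentially formal once the norm is written as a supremum of weighted $\ell_p$ seminorms. The delicate point is to recognize that non-$q$-convexity is \emph{invisible} to the democracy functions alone—so one cannot argue through the growth rate of $(\sum_{n=1}^N w_n)^{1/p}$—and that the correct mechanism is the almost-isometric copy of $\ell_p$ sitting inside $g(\ww,p)$ as a sublattice, combined with the standard fact that the $q$-convexity constant is a lattice-isomorphic invariant.
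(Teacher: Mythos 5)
Your proof is correct and follows essentially the same route as the paper: the failure of $q$-convexity is deduced, exactly as in the paper, from the $(1+\varepsilon)$-lattice copy of $\ell_p$ provided by Theorem~\ref{PreviousResults}~(vi), while your supremum-of-weighted-$\ell_p$-seminorms computation is precisely the ``straightforward'' verification of $p$-convexity that the paper leaves to the reader. Your side remark that the democracy-function test alone cannot detect the failure of $q$-convexity for all $q>p$ is accurate and a nice sanity check, but not needed.
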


\begin{proof}By Theorem~\ref{PreviousResults}~(vi), the space $g(\ww,p)$ contains $\ell_p$ as a sublattice hence
it is not $q$-convex  for any $q>p$.   Showing that $g(\ww,p)$ is $p$-convex is straightforward.  \end{proof}

\begin{proposition}\label{SECotype}Let $1< p<\infty$ and $\ww\in\WW$.  The following are equivalent.
\begin{itemize}
\item[(a)] $g(\ww,p)$ is superreflexive.
\item[(b)] $g(\ww,p)$ has non-trivial cotype. 
\item[(c)] $g(\ww,1)$ has non-trivial cotype. 
\end{itemize}
\end{proposition}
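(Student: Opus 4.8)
The plan is to exploit the fact that $g(\ww,p)$ is nothing but the $p$-convexification of $g(\ww,1)$: directly from the definition of the norm, for $f=(a_n)_{n=1}^\infty$ one has $\Vert f\Vert_{g(\ww,p)}=\big\Vert\,|f|^p\,\big\Vert_{g(\ww,1)}^{1/p}$, where $|f|^p=(|a_n|^p)_{n=1}^\infty$. I will then combine this identity with two standard structural results for Banach lattices, both available in \cite{LinTza1979}: first, that a Banach lattice has non-trivial cotype if and only if it is $q$-concave for some $q<\infty$; and second, that a Banach lattice which is simultaneously $p$-convex and $q$-concave for some $1<p\le q<\infty$ admits an equivalent uniformly convex lattice norm.

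For the equivalence (b) $\Leftrightarrow$ (c) I would argue entirely through concavity. Since $g(\ww,p)$ is the $p$-convexification of $g(\ww,1)$, the behaviour of concavity under $p$-convexification gives that $g(\ww,1)$ is $r$-concave (for some finite $r$) precisely when $g(\ww,p)$ is $pr$-concave. Hence $g(\ww,1)$ is $q$-concave for some $q<\infty$ if and only if $g(\ww,p)$ is $q'$-concave for some $q'<\infty$, and by the first lattice result quoted above this says exactly that $g(\ww,1)$ has non-trivial cotype if and only if $g(\ww,p)$ does.

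For (a) $\Rightarrow$ (b) I would use that superreflexivity is a general, non-lattice, source of cotype: by Enflo's theorem \cite{Enflo1973} a superreflexive space is uniformly convexifiable, and a uniformly convex space has non-trivial cotype (otherwise $c_0$ would be finitely representable in it, which is incompatible with superreflexivity), so (a) forces $g(\ww,p)$ to have non-trivial cotype. For the converse (b) $\Rightarrow$ (a) I would feed (b) into the lattice machinery: by Lemma~\ref{convexity} the lattice $g(\ww,p)$ is $p$-convex with $p>1$, and non-trivial cotype supplies, again via \cite{LinTza1979}, a finite $q$ with $g(\ww,p)$ being $q$-concave; the relation $p\le q$ is automatic, since the $\ell_p$-sublattice furnished by Theorem~\ref{PreviousResults}~(vi) already prevents any concavity better than $p$. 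The second structural result then yields an equivalent uniformly convex renorming of $g(\ww,p)$, whence superreflexivity by Enflo.

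The routine points — verifying the convexification identity and that $p\le q$ holds automatically — are immediate. The real content, and the step I would be most careful about, is correctly invoking the lattice-theoretic dictionary between non-trivial cotype and finite concavity together with the transference of concavity under $p$-convexification. These are precisely the results that make the reduction of all three conditions to the single \emph{non-reflexive} base space $g(\ww,1)$ legitimate, and that let us bypass any direct uniform-convexity estimate on $g(\ww,p)$ itself.
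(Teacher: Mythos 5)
Your proposal is correct and follows essentially the same route as the paper: both rest on the identity exhibiting $g(\ww,p)$ as the $p$-convexification of $g(\ww,1)$, the $p$-convexity supplied by Lemma~\ref{convexity}, the Lindenstrauss--Tzafriri machinery from \cite{LinTza1979} converting non-trivial cotype into finite concavity (the paper phrases this via lower $q$-estimates, a cosmetic difference) and then into uniform convexifiability, and the Maurey--Pisier theorem \cite{MP1976} for (a) $\Rightarrow$ (b). The only divergence is organizational --- you prove (b) $\Leftrightarrow$ (c) and (b) $\Leftrightarrow$ (a) rather than the paper's cycle (b) $\Rightarrow$ (c) $\Rightarrow$ (a) $\Rightarrow$ (b) --- which changes nothing of substance.
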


\begin{proof} (b) $\Rightarrow$ (c)
Assume that $g(\ww,p)$ has cotype $q$ for some $q<\infty$. Then,  $g(\ww,p)$ satisfies  satisfies a lower $q$-estimate. Since 
\[ \Vert f\Vert_{g(\ww,1)}= \Vert |f|^{1/p} \Vert_{g(\ww,p)}^{p},\]
it follows that $g(\ww,1)$ satisfies a lower $q/p$-estimate. By \cite{LinTza1979}*{Proposition 1.f.3 and Theorem 1.f.7}, $g(\ww,1)$ has cotype $r$ whenever $r\ge 2$ and $r>q/p$.

(c) $\Rightarrow$ (a)  Assume that $g(\ww,1)$ has cotype $r<\infty$. Arguing as before, we claim that $g(\ww,p)$ satisfies a lower  $pr$-estimate. Taking into account Lemma~\ref{convexity}, we infer from  \cite{LinTza1979}*{Theorem 1.f.10} that $g(\ww,p)$ is superreflexive.

(a) $\Rightarrow$ (b) is a well known consequence of  \cite{MP1976}*{Theorem 1.1}.
\end{proof}

The key ingredient in the proof of the next theorem is the link between the (Rademacher)
 type/cotype  of a space and the regularity properties of  the democracy functions of its almost greedy bases (see \cite{DKKT2003}).
\begin{theorem}\label{SRvsRegularity} Let $1< p<\infty$ and $\ww\in\WW$ be such that $g(\ww,p)$ is superreflexive. Then $\ww$ is a regular weight.
\end{theorem}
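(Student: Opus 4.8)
The plan is to prove the contrapositive: if $\ww$ is not regular, then $g(\ww,p)$ is not superreflexive. By Proposition~\ref{result:3}, the negation of regularity of $\ww$ (condition (g)) is equivalent to the failure of the primitive weight $(s_n)_{n=1}^\infty$, $s_n=\sum_{k=1}^n w_k$, to have the LRP (condition (c)); that is, for every integer $r\ge 2$ there exists $n$ with $s_{rn}<2s_n$, and moreover by the equivalence with (b) we know the LRP fails for $s_n$. Recall from Theorem~\ref{PreviousResults}~(iii) that the democracy functions of the canonical basis $\EE$ satisfy $\varphi_l[\EE,g(\ww,p)](m)=\varphi_u[\EE,g(\ww,p)](m)=s_m^{1/p}$, so $\EE$ is $1$-democratic, and being $1$-subsymmetric it is almost greedy. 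The fundamental function of the basis is therefore exactly $s_m^{1/p}$.

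The strategy then exploits the link (advertised just before the statement, following \cite{DKKT2003}) between the nontrivial type/cotype of a superreflexive space and the regularity of the democracy function of an almost greedy basis. Concretely, I would combine two facts. First, by Proposition~\ref{SECotype}, superreflexivity of $g(\ww,p)$ is equivalent to $g(\ww,p)$ having nontrivial cotype. Second, I would invoke the theorem of \cite{DKKT2003} which asserts that if a space has nontrivial type and cotype (equivalently, is superreflexive with an almost greedy basis whose fundamental function must then satisfy both the LRP and the upper regularity property), then the fundamental function of any almost greedy basis enjoys the LRP. Since here the fundamental function is $s_m^{1/p}$, the LRP for $s_m^{1/p}$ translates, after raising to the power $p$ and adjusting the doubling constant, into the statement that there is $r\ge 2$ and $C>1$ with $s_{rn}\ge C s_n$ for all $n$ — which is precisely condition (d) of Proposition~\ref{result:3}, hence condition (g): $\ww$ is regular.

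In executing this I would first record that $\EE$ is almost greedy in $g(\ww,p)$: it is $1$-subsymmetric by Theorem~\ref{PreviousResults}~(i), hence quasi-greedy, and it is $1$-democratic by the identity $\varphi_l=\varphi_u=s_m^{1/p}$ from (iii); quasi-greedy plus democratic gives almost greedy. Then I would apply the cotype-regularity dichotomy from \cite{DKKT2003}: under nontrivial cotype of the space, the fundamental function $(\varphi[\EE](m))_{m=1}^\infty=(s_m^{1/p})_{m=1}^\infty$ has the LRP. Translating the LRP of $(s_m^{1/p})$ back to $(s_m)$ is elementary — if $(s_{rm}^{1/p})\ge 2 s_m^{1/p}$ then $s_{rm}\ge 2^p s_m$, giving (a)/(d) of Proposition~\ref{result:3} with $C=2^p>1$ — and an invocation of that proposition (whose hypothesis that $\ww$ is essentially decreasing holds automatically since $\ww\in\WW$ is decreasing) yields that $\ww$ is regular.

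The main obstacle is locating and correctly applying the precise statement from \cite{DKKT2003} that connects the cotype of the ambient space to the lower regularity property of the fundamental function of an almost greedy basis; the phrasing in the literature is typically about type/cotype of the basis or the existence of upper and lower estimates, so some care is needed to extract exactly the implication "nontrivial cotype $\Rightarrow$ LRP of the lower democracy function" in the form required here. The remaining steps — verifying that $\EE$ is almost greedy and unwinding the power-$p$ relationship between the LRP for $s_m^{1/p}$ and the doubling condition for $s_m$ — are routine once that reference is pinned down.
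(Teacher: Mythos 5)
Your proposal is correct and follows essentially the same route as the paper: superreflexivity gives nontrivial cotype via Proposition~\ref{SECotype}, the result of \cite{DKKT2003} (their Proposition 4.1) then forces the fundamental function of the almost greedy canonical basis to have the LRP, and Proposition~\ref{result:3} converts this into regularity of $\ww$. The only cosmetic difference is that the paper transfers the cotype to $g(\ww,1)$ so that the fundamental function is exactly $s_m=\sum_{n=1}^m w_n$, whereas you stay in $g(\ww,p)$ and perform the harmless power-$p$ translation from the LRP of $(s_m^{1/p})$ to the doubling condition $s_{rm}\ge 2^p s_m$.
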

\begin{proof}
  The space  $g(\ww,1)$ has finite cotype by Proposition~\ref{SECotype}. Combining
\cite{DKKT2003}*{Proposition 4.1} and Theorem~\ref{PreviousResults}~(c) yields that
$(\sum_{n=1}^m w_n)_{m=1}^\infty$ has the LRP. Then, by Proposition~\ref{result:3}, $\ww$ is a regular weight.
\end{proof}

\begin{corollary}\label{loofiniterepresentability} Let $1\le p<\infty$ and $\ww\in\WW$ be non-regular. Then $\ell_\infty$ is finitely representable in 
$g(\ww,p)$. 
\end{corollary}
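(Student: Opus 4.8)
The plan is to deduce the statement from the Maurey--Pisier theorem, which characterizes finite representability of $\ell_\infty$ in a Banach space $X$ as the failure of $X$ to have finite cotype. Thus the entire task reduces to showing that when $\ww$ is non-regular the space $g(\ww,p)$ has trivial cotype. I would handle all $1\le p<\infty$ simultaneously, using that the canonical basis $\EE$ is unconditional (it is $1$-subsymmetric by Theorem~\ref{PreviousResults}~(i)) and democratic (its upper and lower democracy functions coincide by Theorem~\ref{PreviousResults}~(iii)), so that $\EE$ is an almost greedy basis whose democracy function is $\varphi(m)=(\sum_{n=1}^m w_n)^{1/p}=s_m^{1/p}$, where $s_m=\sum_{k=1}^m w_k$ denotes the primitive weight.

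I would argue by contradiction, assuming $g(\ww,p)$ has finite cotype. The key input is the connection between cotype and the regularity of the democracy function of an almost greedy basis, namely \cite{DKKT2003}*{Proposition~4.1}, which yields that $\varphi$ has the lower regularity property. Unwinding this gives $r\ge 2$ with $s_{rn}^{1/p}\ge 2\,s_n^{1/p}$, that is $s_{rn}\ge 2^p s_n$, for every $n\in\NN$. Because $\ww\in\WW$ is non-increasing it is in particular essentially decreasing, so Proposition~\ref{result:3} is applicable: the inequality just obtained is condition~(d) with $C=2^p>1$, which is equivalent to condition~(g), i.e.\ to $\ww$ being regular. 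This contradicts the hypothesis, so $g(\ww,p)$ has no finite cotype and the Maurey--Pisier theorem \cite{MP1976} finishes the argument.

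The point demanding the most care is the verification that $\EE$ is genuinely almost greedy so that \cite{DKKT2003}*{Proposition~4.1} applies, together with the bookkeeping of the exponent $1/p$ when passing from the lower regularity of $\varphi=s^{1/p}$ to a hypothesis of Proposition~\ref{result:3}; a careless treatment of the $p$-th power there is the most likely source of a spurious constant. For the range $1<p<\infty$ one could instead obtain trivial cotype by combining the contrapositive of Theorem~\ref{SRvsRegularity} (non-regular $\Rightarrow$ not superreflexive) with Proposition~\ref{SECotype} (superreflexive $\Leftrightarrow$ non-trivial cotype), but the democracy-function route has the merit of covering the non-reflexive endpoint $p=1$ uniformly, where no superreflexivity statement is available.
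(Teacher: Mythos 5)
Your argument is correct, but it follows a different route from the paper's. The paper's proof of this corollary is a two-line deduction from results already established: by Theorem~\ref{SRvsRegularity} (applied at the fixed exponent $p=2$) the space $g(\ww,2)$ is not superreflexive, then Proposition~\ref{SECotype} transfers this to trivial cotype of $g(\ww,1)$ and hence of $g(\ww,p)$ for every $1\le p<\infty$, and Maurey--Pisier concludes. You instead bypass superreflexivity entirely and run the cotype--democracy argument of \cite{DKKT2003}*{Proposition~4.1} directly in $g(\ww,p)$ for the given $p$: the democracy function is $\varphi(m)=s_m^{1/p}$, its LRP gives $s_{rn}\ge 2^p s_n$, which is condition~(d) of Proposition~\ref{result:3} with $C=2^p>1$ and hence forces $\ww$ to be regular. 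This is in effect an inlined and $p$-general version of the paper's proof of Theorem~\ref{SRvsRegularity} (which the paper only runs at $p=1$, where $\varphi(m)=s_m$ and no exponent bookkeeping is needed); your handling of the exponent is correct, and the applicability of Proposition~\ref{result:3} is justified since any $\ww\in\WW$ is non-increasing, hence essentially decreasing. What your route buys is self-containedness and a uniform treatment of $p=1$ without invoking the renorming/superreflexivity machinery; note, however, that the paper's route also covers $p=1$, since the implication passes through the cotype of $g(\ww,1)$ via condition~(c) of Proposition~\ref{SECotype}, so the advantage is one of directness rather than of scope. What the paper's route buys is economy: having already proved Theorem~\ref{SRvsRegularity} and Proposition~\ref{SECotype}, the corollary comes for free.
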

\begin{proof}By Corollary~\ref{SRvsRegularity}, $g(\ww,2)$ is not superreflexive. Then, by Proposition~\ref{SECotype}, $g(\ww,p)$ has trivial cotype.  The proof is over by appealing to \cite{MP1976}*{Theorem 1.1}.
\end{proof}

\begin{remark}
 Corollary~\ref{loofiniterepresentability} could alternatively be shown by following the steps of the proof from
\cite{Altshuler1975}  that $d(\ww,p)$ is not superreflexive if $\ww$ fails to be regular.  Altshuler's  method leads to the following result: for each $p>1$, each non-regular weight $\ww$, each $\varepsilon>0$, and each $k\in\NN$ there is a constant-coefficient finite block basic sequence  of the canonical basis of $g(\ww,p)$ that is $(1+\varepsilon)$-equivalent to the canonical basis of $\ell_\infty^k$.
We would  also like to point out that the fact that $d(\ww,p)$ is superreflexive only if $\ww$ is regular can  be obtained using intrinsic ideas from this manuscript.
\end{remark}

\section{Greedy renormimgs of Garling sequence spaces}\label{Renorming}

\noindent 
Given a basis $(\xx_n)_{n=1}^\infty$  for $X$ and  $f$, $g$ in $X$ we say that $g$ is a \textit{greedy permutation} of $f$  if we can write
\begin{equation}\label{eq:greedypermutation}
f=h+t \sum_{n\in A} \varepsilon_{n} \xx_{n}\quad\text{and}\;\; g=h+ t \sum_{n\in B}\theta_{n} \xx_{n}
\end{equation}
for   some $h\in X$,
some sets of integers $A$ and $B$ of the same finite cardinality with $\supp(h)\cap (A\cup B)=\emptyset$,  some signs $(\varepsilon_{n})_{n\in A}$ and $(\theta_{n})_{n\in B}$, and some scalar $t$ such that
 $\sup_n |\xx_n^*(h)|\le t$. If, in addition, $A \cap B=\emptyset$, we say that $g$ is a 
 \textit{disjoint greedy permutation} of $f$.
 In other words,  $g$ is a  disjoint greedy permutation of $f$  if $g$ is obtained from $f$ by moving  those  terms of $f$ (or some of them) whose coefficients are  maximum in absolute value to  gaps in the support of $f$. We are also allowed to change the sign of (some of) the terms we move. Then, the basis
$(\xx_n)_{n=1}^\infty$ is said to  satisfy \textit{Property (A)} if $\Vert f\Vert=\Vert g\Vert$ whenever $g$ is a  disjoint greedy permutation of $f$.
Actually,   $(\xx_n)_{n=1}^\infty$ has Property (A) if and only if whenever $g$ is a greedy permutation of $f$ then $\Vert g \Vert =\Vert f\Vert$ (which is the way Property (A) was originally defined in \cite{AW2006}).
Property (A)  is  stronger than democracy. 
Albiac and Wojtaszczyk \cite{AW2006} proved that a basis is $1$-greedy if and only if is $1$-suppression unconditional and has Property (A).

As an immediate consequence of Theorem~\ref{PreviousResults}~(i) we obtain that the canonical basis of
$g(\ww,p)$ is $2$-greedy. However, it   is never   $1$-greedy as we see next.

\begin{lemma} The canonical basis of $g(\ww,p)$, $1\le p<\infty$ and $\ww\in\WW$, is not  $1$-greedy. 
\end{lemma}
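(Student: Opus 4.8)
The plan is to invoke the characterization recalled just above the statement: a basis is $1$-greedy if and only if it is $1$-suppression unconditional and has Property (A). Since the norm $\Vert\cdot\Vert_g$ depends on the coefficients $(a_n)_{n=1}^\infty$ only through the absolute values $(|a_n|)_{n=1}^\infty$, the canonical basis $\EE$ is $1$-unconditional, hence in particular $1$-suppression unconditional. Consequently it suffices to exhibit a vector $f$ and a disjoint greedy permutation $g$ of $f$ with $\Vert f\Vert_g\neq\Vert g\Vert_g$; this shows that $\EE$ fails Property (A) and therefore cannot be $1$-greedy.

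To build such a pair I would exploit that $\ww\in c_0$, so the weights cannot all equal $1$: let $k\ge 2$ be the least index with $w_k<1$, so that $w_1=\cdots=w_{k-1}=1>w_k$. Fix a small scalar $0<s<1$ and a large integer $N>k$, and set
\[
f=\sum_{j=1}^{k-1}\ee_j+s\,\ee_k,\qquad g=\sum_{j=2}^{k-1}\ee_j+s\,\ee_k+\ee_N.
\]
Then $g$ is obtained from $f$ by relocating the maximal-coefficient term $\ee_1$—whose coefficient $1$ dominates every coefficient of the common part $h=\sum_{j=2}^{k-1}\ee_j+s\,\ee_k$—to the empty slot $N$. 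Thus, writing $t=1$, $A=\{1\}$, $B=\{N\}$, we have $\sup_n|\ee_n^*(h)|=1\le t$ and $A\cap B=\emptyset$, so $g$ is a disjoint greedy permutation of $f$ in the sense of \eqref{eq:greedypermutation}.

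It remains to compare the two norms. For $f$, matching the $k-1$ unit coefficients with the top $k-1$ weights (all equal to $1$) and the coefficient $s$ with the next weight $w_k$ gives $\Vert f\Vert_g^p=(k-1)+s^pw_k$. For $g$, the $k-1$ unit coefficients fill exactly the slots of weight $1$, giving a norm $\bigl(\sum_{n=1}^{k-1}w_n\bigr)^{1/p}=(k-1)^{1/p}$ by Theorem~\ref{PreviousResults}(iii), i.e.\ a contribution of $k-1$ to $\Vert g\Vert_g^p$; any admissible increasing selection that also incorporates $s\,\ee_k$ must demote one unit coefficient to the slot of weight $w_k<1$, which for $s$ small strictly lowers the value, so $\Vert g\Vert_g^p=k-1$. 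Hence $\Vert f\Vert_g^p=(k-1)+s^pw_k>k-1=\Vert g\Vert_g^p$, and Property (A) fails.

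The only delicate point is this second computation: one must verify that, for $s$ small, the optimal increasing selection for $g$ discards the term $s\,\ee_k$ rather than including it. This is precisely where the strict drop $w_k<1$ is indispensable, and it also explains why the construction places $k-1$ unit coefficients rather than a single one (the naive choice $f=\ee_1+s\,\ee_2$ fails when $w_2=1$). I expect this elementary finite case analysis of admissible selections—comparing the value $(k-2)+s^p+w_k$ obtained by keeping $s\,\ee_k$ against the value $k-1$ obtained by dropping it—to be the main thing to pin down, rather than any conceptual obstacle.
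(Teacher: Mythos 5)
Your proposal is correct and follows essentially the same route as the paper: both arguments locate the first index where the weight drops strictly below $1$ and exhibit an explicit pair $(f,g)$ with $g$ a (disjoint) greedy permutation of $f$ but $\Vert f\Vert_g\neq\Vert g\Vert_g$, contradicting Property (A) via the Albiac--Wojtaszczyk characterization. The only difference is cosmetic: the paper perturbs with a large coefficient $t>1$ moved past the block of unit weights, while you perturb with a small coefficient $s$ and move a unit coefficient; your finite case analysis of the optimal increasing selections checks out.
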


\begin{proof}Choose $k\in\NN$ and $v\in(0,\infty)$ with $w_n=1$ for $1\le n\le k$ and $w_{k+1}=v<1$.
Pick  $t> 1$ and put $f=t\ee_1+\sum_{n=2}^{k+1} \ee_k$ and  $g=t\ee_{k+2}+\sum_{n=2}^{k+1} \ee_k$. Consider for each $j\in\NN\cup\{0\}$ the translation map $\phi_j\in\OO$ given by $\phi_j(n)=n+j$. Let
\[
x:=\Vert f \Vert_g^p
=\max_{j\in\{0,k\}} \Vert f\circ\phi_j \Vert_{p,\ww}^p
=\max\{ t+k-1+v,tv\},\]
and
\[
y:=\Vert g \Vert_g^p
=\max_{j\in\{1,2\}} \Vert g\circ\phi_j \Vert_{p,\ww}^p
=\max\{ k+tv,t+k-1\}.
\]
Notice that $g$ is a greedy rearrangement of $f$. Hence, assuming that $(\ee_n)_{n=1}^\infty$ is $1$-greedy,
yields $x=y$.
We infer that $x= t+k-1+v$ and $y= k+tv$. Then
we reach the absurdity $v=1$.
\end{proof}

In order 
 to give more relevance to Theorem~\ref{MainTheorem}, it would be convenient  to recall that under a natural condition on the weight $\ww$ the canonical basis is not a symmetric basic sequence of $g(\ww,p)$
 (see Theorem~\ref{PreviousResults}~(ii)).

\begin{theorem}\label{MainTheorem}Let $1\le p<\infty$ and $\ww\in\WW$ be a regular weight. Then there is a renorming of $g(\ww,p)$ with respect to which the canonical basis is   $1$-greedy and  $1$-subsymmetric.
\end{theorem}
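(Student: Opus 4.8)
The plan is to build the required equivalent norm by symmetrising $\Vert\cdot\Vert_g$ over the transformations we wish to turn into isometries, and then to apply the Albiac--Wojtaszczyk criterion recalled above from \cite{AW2006}: the basis is $1$-greedy if and only if it is $1$-suppression unconditional and has Property~(A). As $\EE$ is already $1$-subsymmetric by Theorem~\ref{PreviousResults}~(i), and $1$-subsymmetry entails $1$-suppression unconditionality, it suffices to exhibit an equivalent norm that is at once $1$-subsymmetric and enjoys Property~(A).

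Concretely, I would let $\mathcal O(f)$ be the orbit of $f$ under the semigroup of maps generated by the sign changes $(a_n)_n\mapsto(\varepsilon_n a_n)_n$, the spreadings $\sum_n a_n\ee_n\mapsto\sum_n a_n\ee_{\phi(n)}$ with $\phi\in\OO_\infty$, and the disjoint greedy permutations of this section, and set
\[
\normiii{f}:=\sup\{\Vert g\Vert_g:g\in\mathcal O(f)\}.
\]
Every generator is reversible within the semigroup: sign changes are involutive, a spreading is inverted by compressing along the same increasing injection, and a disjoint greedy permutation of $f$ is again a greedy permutation when read backwards, since the relocated entries keep the maximal modulus. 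Thus $\mathcal O(\cdot)$ splits $g(\ww,p)$ into orbits and $\normiii{\cdot}$ is constant on each of them; in particular it is invariant under all three families, which is exactly $1$-subsymmetry (whence also $1$-suppression unconditionality) together with Property~(A).

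The equivalence $\normiii{\cdot}\approx\Vert\cdot\Vert_g$ should be the easy half, and notably it does not require $\ww$ to be regular, in agreement with the assertion made in the abstract for \emph{all} Garling spaces. The inequality $\normiii{f}\ge\Vert f\Vert_g$ is the choice $g=f$. For the reverse, put $M=\sup_n|a_n|$, let $d$ be the number of entries of modulus $M$, and decompose any $g\in\mathcal O(f)$ as $g=h+M\sum_{n\in B}\theta_n\ee_n$ with $|B|=d$; here $h$ is a spreading of the non-maximal part of $f$, so $\Vert h\Vert_g\le\Vert f\Vert_g$ by suppression. Estimating the defining supremum of $\Vert g\Vert_g$ by separating the indices landing in $B$ from those landing in $\supp(h)$, and inserting the exact value $\Vert\sum_{n\in B}\ee_n\Vert_g=(\sum_{n=1}^{d}w_n)^{1/p}$ from Theorem~\ref{PreviousResults}~(iii), gives $\Vert g\Vert_g^p\le M^p\sum_{n=1}^{d}w_n+\Vert h\Vert_g^p\le 2\Vert f\Vert_g^p$, hence $\normiii{f}\le 2^{1/p}\Vert f\Vert_g$.

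The step I expect to be the genuine obstacle is showing that $\normiii{\cdot}$ obeys the triangle inequality, i.e.\ that it really is a norm. Because the set of maximal entries of a vector depends on that vector, a greedy permutation is not a linear operator, and the standard proof that a supremum of $\Vert L\,\cdot\,\Vert_g$ over a fixed family of linear maps $L$ is subadditive is unavailable. To get around this I would exploit the rigidity of the orbit --- under every generator only the coordinates of maximal modulus move, while the relative order of the remaining coordinates stays frozen --- and attempt, given a near-extremal $g\in\mathcal O(f_1+f_2)$, to distribute the generators producing it over the two summands so as to realise $g=g_1+g_2$ with $g_i\in\mathcal O(f_i)$, which would yield $\Vert g\Vert_g\le\normiii{f_1}+\normiii{f_2}$. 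The difficulty is precisely that a maximal entry of $f_1+f_2$ need not be maximal in either $f_i$, so this bookkeeping is delicate and is where the bulk of the work lies; should it resist, the alternative is to replace the orbit supremum by an explicit formula, manufactured from the decreasing rearrangement of the maximal block together with the order-preserving Garling functional on the remainder, for which subadditivity can be checked by hand. Once $\normiii{\cdot}$ is known to be a norm, it is equivalent to $\Vert\cdot\Vert_g$, is $1$-subsymmetric, and has Property~(A), so by \cite{AW2006} the canonical basis is $1$-greedy and $1$-subsymmetric for $\normiii{\cdot}$, completing the proof.
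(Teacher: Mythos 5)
Your reduction to ``$1$-suppression unconditional $+$ Property (A)'' is the right frame, and your orbit-symmetrisation is the natural first attempt, but the step you flag as ``the genuine obstacle'' --- the triangle inequality for $\normiii{\cdot}:=\sup_{g\in\mathcal O(f)}\Vert g\Vert_g$ --- is not a technicality you can defer: it is the entire content of the theorem, and your proposed repair (redistributing the generators of a near-extremal $g\in\mathcal O(f_1+f_2)$ over the two summands) cannot work as stated, precisely because an index carrying a maximal-modulus coefficient of $f_1+f_2$ need not carry a maximal-modulus coefficient of either $f_i$, so the greedy permutation applied to the sum does not factor through greedy permutations of the summands. The paper avoids the nonlinearity altogether: it never takes a supremum over greedy permutations. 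Instead it defines, for each pair $(A,\alpha)$ with $|A|=m$ and $\alpha$ an increasing map with $\dom(\alpha)\subseteq[m+1,\infty)$ and $\range(\alpha)\cap A=\emptyset$, the genuine seminorm
\[
\Vert f\Vert_{A,\alpha}=\Bigl(t_m\Vert f|_A\Vert_p^p+\Vert f\circ\alpha\Vert_{p,\ww}^p\Bigr)^{1/p},
\qquad t_m=\frac{2}{m}\sum_{n=1}^m w_n,
\]
and sets $\Vert f\Vert=\sup_{(A,\alpha)}\Vert f\Vert_{A,\alpha}$. Each $\Vert\cdot\Vert_{A,\alpha}$ is built from linear operations (restriction and composition with a fixed injection), so the supremum is automatically a norm; the flat $\ell_p$-term $t_m\Vert f|_A\Vert_p^p$ is what makes the norm insensitive to which positions inside $A$ the large coefficients occupy, which is where Property (A) comes from. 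The hard work then goes into a Claim --- that for $f$ with a coefficient of maximal modulus at $k$ the supremum may be restricted to pairs with $k\in A$ and $A\cup\range(\alpha)\subseteq\supp(f)$ --- whose proof hinges on the identities $(m+1)t_{m+1}-mt_m=2w_{m+1}$ and the monotonicity of $(t_m)$.

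Two further points. First, your claim that the equivalence of norms ``does not require $\ww$ to be regular'' is at odds with the theorem's hypothesis and with the paper's construction: there the upper estimate $\Vert f\Vert\le(2D+1)^{1/p}\Vert f\Vert_g$ uses $D=\sup_m\frac{1}{mw_m}\sum_{n=1}^m w_n<\infty$, i.e.\ exactly regularity of $\ww$ (via $t_m\le 2Dw_r$ for $r\le m$), and regularity also feeds the Claim; so even if your orbit functional were somehow shown to be a norm, you should not expect the hypothesis to be removable by this construction. Second, your fallback suggestion --- replace the orbit supremum by ``an explicit formula, manufactured from the decreasing rearrangement of the maximal block together with the order-preserving Garling functional on the remainder'' --- is in fact a reasonable description of what the paper does, but making it subadditive is exactly the point of introducing the auxiliary parameters $(A,\alpha)$ and the calibrated weights $t_m$; without that you have not proved the theorem.
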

Before proving Theorem~\ref{MainTheorem} we shall introduce some additional notation. 
Suppose $1\le p<\infty$, and let  $\ww=(w_i)_{i\in I}$ be a family of positive scalars.
Given a family of scalars $f=(a_i)_{i\in A}$, where $A\subseteq I$, we put
\[
\Vert f \Vert_{p,\ww}=\left(\sum_{i\in A} |a_i|^p w_i\right)^{1/p}.
\]
Given $r\in\NN$, denote by $\OO_r$ the set of all increasing functions from the integer interval $[0,r]\cap \ZZ$ into $\NN$. Put
$\OO_f=\cup_{r=1}^\infty \OO_r$ and $\OO=\OO_f\cup\OO_\infty$. Note that for all $f\in\FF^\NN$, \[
\Vert f \Vert_g=\sup_{\phi  \in \OO_f} \Vert f\circ \phi \Vert_{p,\ww}=\sup_{\phi  \in \OO_\infty} \Vert f\circ \phi \Vert_{p,\ww}
=\sup_{\phi  \in \OO} \Vert f\circ \phi \Vert_{p,\ww}.
\]
where  $f\circ \phi =  (a_{\phi(n)})_{n\in \dom(\phi)}$.
Let  $\HH$  be the set of all increasing functions from a subset of $\NN$ into $\NN$.
 Given $\beta\in\HH$
 consider the linear operator $U_{\beta}\colon\FF^\NN\to \FF^\NN$ defined
 by  $U_{\beta}(f)=(b_n)_{n=1}^\infty$, where, if $f= (a_n)_{n=1}^\infty$,
\begin{equation*}
 b_n=\begin{cases} 
 a_{\beta(n)}& \text{ if } n\in \dom(\beta), \\
 0 & \text{otherwise.}
 \end{cases}
\end{equation*}
Note that  if the canonical basis $(\ee_n)_{n=1}^\infty$ of a sequence space $X$ is $1$-unconditional and verifies  $\sup_{\beta\in\HH} \Vert U_\beta \colon  X \to X\Vert\le C$ 
then  $(\ee_n)_{n=1}^\infty$  is a  $C$-subsymmetric basic sequence in $X$.
\begin{proof}[Proof of Theorem~\ref{MainTheorem}] Let $\ww=(w_n)_{n=1}^\infty$  and put 
\[
D=\sup_m \frac{1}{mw_m}\sum_{n=1}^m   w_n.
\]
For $m\in\NN$ define 
 \[t_m=\frac{2}{m}\sum_{n=1}^m w_n.
 \]
 We have that $(t_m)_{m=1}^\infty$ is non-increasing, that $t_m\le 2 D  w_r$ for 
$r\le m$, and that
$
(m+1)t_{m+1}-m t_m=2 w_{m+1}.
$

Given $m\in\NN\cup\{0\}$, let us denote by $\FFF_m$ the set of pairs $(A,\alpha)$, where $A\subseteq \NN$ and $\alpha\in\HH$
 verify $|A|=m$, $\dom(\alpha)\subseteq[m+1,\infty)$, and  $\range(\alpha)\cap A=\emptyset$.

Consider also   the set $\FFF_m'$ of triads $(\rho,\phi,\psi)$, where 
$\rho\in\OO_m$,  $\phi,\psi\in\OO_r$ for some $r\in\NN\cup\{\infty\}$,
$\range(\psi)\subseteq  [m+1,\infty)$ and $\range(\rho)\cap \range(\phi)=\emptyset$.
Note that the mapping $(\rho,\phi,\psi)\mapsto(A,\alpha)$ where $A$ and $\alpha$ are determined by
 \begin{equation}\label{eq:7}
A=\range(\rho), \quad \alpha(\psi(n))=\phi(n) \text{ for all } n\in\dom(\phi),
\end{equation}
 is a bijection from $\FFF_m'$ onto $\FFF_m$.
 
Given $(A,\alpha)\in\FFF_m$  and  $f\in\FF^\NN$, we define
\[
\Vert f \Vert_{A,\alpha}=
\left( t_m \Vert f|_A \Vert_p^p + \Vert f\circ \alpha \Vert^p_{p,\ww}  \right)^{1/p}.
\]
Let  $(\rho,\phi,\psi)$ be the element in $\FFF'_m$ that corresponds to $(A,\alpha)$ by the relation \eqref{eq:7}. We have
\begin{align*}
\Vert f \Vert_{A,\alpha}
&= \left(  t_m \Vert f\circ \rho  \Vert_p^p + \Vert f\circ \phi \Vert^p_{p,\ww\circ \psi} \right)^{1/p}\\
&\le \left( 2 D \Vert f\circ \rho \Vert^p_{p,\ww} +\Vert f\circ \phi \Vert^p_{p,\ww}\right)^{1/p}\\
& \le\left( 2 D+ 1\right)^{1/p} \Vert f \Vert_g
\end{align*}
 Put $\FFF=\cup_{m=0}^\infty \FFF_m$ and define
 \[
 \Vert f\Vert =\sup_{(A,\alpha)\in\FFF} \Vert f \Vert_{A,\alpha}, \quad f\in\FF^\NN.
 \]
We have 
\[\Vert f\Vert \le (2D+1)^{1/p}  \Vert f \Vert_g\]
 and
\[
 \Vert f\Vert 
 \ge \sup_{(A,\alpha)\in\FFF_0} \Vert f \Vert_{A,\alpha}
= \sup_{\alpha\in\HH} \Vert f \circ\alpha \Vert_{p,\ww} 
 \ge \sup_{\alpha\in\OO}  \Vert f \circ\alpha \Vert_{p,\ww}
=\Vert f \Vert_g.
 \]
 Hence $(g(\ww,p),\Vert \cdot\Vert)$ is a renorming of $g(\ww,p)$.
 
 Next we go on to substantiate the following Claim:
 
\noindent\textbf{Claim.}
Let $f=(a_n)_{n=1}^\infty \in \FF^\NN$ and $k\in \NN$  such that $|a_k|\ge |a_n|$ for every $n\in\NN$. Then
\[
\Vert f\Vert = \sup\{  
\Vert f \Vert_{A,\alpha} \colon (A,\alpha)\in\FFF,  k\in A,  A \cup\range(\alpha)\subseteq\supp(f)
\}.
\]

\medskip\noindent
 Assume, without loss of generality that $|a_k|=1$.
Pick $(A,\alpha)\in\FFF_m$ for some $m\in\NN$. Let $B=A\cap\supp f$ and $\beta$ be the restriction of 
$\alpha$ to $\alpha^{-1}(\supp(f))$. We have
$(B,\beta)\in\FFF_r$ for some $r\le m$,  that
\[
x:=\Vert f|_A \Vert_p =\Vert f|_B \Vert_p,
\]
and  that 
\[\Vert f\circ\alpha \Vert_{p,\ww} =\Vert f\circ \beta \Vert_{p,\ww}.
\] Hence
$\Vert f \Vert_{A,\alpha}\le \Vert f \Vert_{B,\beta}$.

If $k\in B$ we are done.
 Assume that $k\notin B$. Let
$
 E=B\cup\{k\}
 $
 and 
$ \gamma$ be the restriction of $\beta$ to $\beta^{-1}(\NN\setminus\{k\})\cap[r+2,\infty)$.
Notice that $(E,\gamma)\in\FFF_{r+1}$.
If $r+1\in \dom(\beta)$ put 
\[
u=w_{r+1} \text{ and }y=|a_{\beta(r+1)}|,
\] and, otherwise, put $y=u=0$.
 If there is a (unique) $j\ge r+2$ with $\beta(j)=k$, put 
 \[
 v=w_j \text{ and } z=|a_{\beta(j)}|,
 \] and, otherwise, put $z=v=0$.
 Taking into account that $x^p \le p$ and that $y^p,z^p\le 1$, and that $t_r\le t_{r+1}$,
\begin{align*}
 \Vert f \Vert_{B,\beta}^p - \Vert f \Vert_{E,\gamma}^p
& = t_r \,  x^p - t_{r+1} (1+x^p) +y^p u+ z^p v \\
&\le t_r\, p-  t_{r+1} (1+ p) +u+v \\ 
&=-2w_{r+1}+ u+v \\
&\le-2w_{r+1}+ w_{r+1}+w_{r+2} \\ 
&\le  0,
\end{align*}
as desired.

Now we are ready to prove that $(\ee_n)_{n=1}^\infty$ is $1$-greedy with respect to the norm $\Vert \cdot\Vert$.
Since it is $1$-unconditional, we must only show that it has  Property (A). To that end if suffices to see that
\begin{equation}\label{eq:23}
\Vert \ee_k+f\Vert \le \Vert \varepsilon \ee_j+f\Vert
\end{equation}
for every 
 $f\in \FF^\NN$ with  $\Vert f \Vert_\infty\le 1$, every sign $\varepsilon$,  and every $j,k\notin\supp (f)$ 
 with $j\not=k$. 
 
 In order to compute  $\Vert \ee_k+f\Vert$, taking into account the Claim, we can and we do restrict our attention  to  $(A,\alpha)\in\FFF$ with $k\in A$ and $A\cup\range(\alpha)\subseteq \{k\}\cup\supp(f)$.
In particular,  we have $ j\not \in A   \cup\range(\alpha)$. Choose $B=(A\cup\{j\})\setminus\{ k\}$.
 We have $(B,\alpha)\in\FFF$ and 
 \[\Vert (\ee_k + f )|_A \Vert_p = \Vert (\varepsilon\ee_j+ f)|_B \Vert_p.\]
Hence,
\[
\Vert \ee_k+f \Vert_{A,\alpha}
=\Vert \varepsilon\ee_j+f \Vert_{B,\alpha}
\le \Vert \varepsilon\ee_j +f\Vert.
 \]
 We  obtain \eqref{eq:23} by taking the supremum on $(A,\alpha)$.
 
 Let us prove that the canonical basis is $1$-subsymmetric with respect to the norm
 $\Vert \cdot \Vert$.
Let  $\beta\in\HH$,  $f\in\FF^\NN$ and $(A,\alpha)\in\FFF$.
Since  $|\beta(A)|\le |A|$, we have
 $(\beta(A), \beta\circ\alpha)\in\FFF$. Moreover
\[
\Vert U_{\beta}(f)|_A \Vert_p =\Vert f|_{\beta(A)} \Vert_p,
\]
and
\[
\Vert U_{\beta}(f) \circ\alpha \Vert_{p,\ww} =\Vert f\circ \beta\circ\alpha \Vert_{p,\ww},
\]
so that
$
 \Vert  U_{\beta}(f)\Vert_{A,\alpha}  \le \Vert f\Vert.
 $
 Consequently,   $\Vert U_{\beta}(f)\Vert \le \Vert f\Vert$. 
 \end{proof}

 \begin{problem}  Does every Banach space with  a subsymmetric basis admit a $1$-greedy renorming?
 \end{problem}

\begin{bibsection}
\begin{biblist}

\bib{AA2016}{article}{
   author={Albiac, F.},
   author={Ansorena, J. L.},
   title={Lorentz spaces and embeddings induced by almost greedy bases in
   Banach spaces},
   journal={Constr. Approx.},
   volume={43},
   date={2016},
   number={2},
   pages={197--215},
}

 \bib{Wallis2017}{article}{
 author={Albiac, F.},
 author={Ansorena, J.L.},
   author={Wallis, B.},
   title={On Garling sequence spaces},
   journal={arXiv:1703.07772 [math.FA]},
}

\bib{AlbiacKalton2016}{book}{
 author={Albiac, F.},
 author={Kalton, N. J.},
 title={Topics in Banach space theory, 2nd revised and updated edition},
 series={Graduate Texts in Mathematics},
 volume={233},
 publisher={Springer International Publishing},
 date={2016},
 pages={xx+508},
 }
 
 \bib{AW2006}{article}{
   author={Albiac, F.},
   author={Wojtaszczyk, P.},
   title={Characterization of 1-greedy bases},
   journal={J. Approx. Theory},
   volume={138},
   date={2006},
   number={1},
   pages={65--86},
}

 \bib{Altshuler1975}{article}{
   author={Altshuler, Z.},
   title={Uniform convexity in Lorentz sequence spaces},
   journal={Israel J. Math.},
   volume={20},
   date={1975},
   number={3-4},
   pages={260--274},
}
 
 \bib{DKKT2003}{article}{
   author={Dilworth, S. J.},
   author={Kalton, N. J.},
   author={Kutzarova, Denka},
   author={Temlyakov, V. N.},
   title={The thresholding greedy algorithm, greedy bases, and duality},
   journal={Constr. Approx.},
   volume={19},
   date={2003},
   number={4},
   pages={575--597},
 }

\bib{DOSZ2011}{article}{
   author={Dilworth, S. J.},
   author={Odell, E.},
   author={Schlumprecht, Th.},
   author={Zs{\'a}k, A.},
   title={Renormings and symmetry properties of 1-greedy bases},
   journal={J. Approx. Theory},
   volume={163},
   date={2011},
   number={9},
   pages={1049--1075},
}

\bib{Enflo1973}{article}{
   author={Enflo, P.},
   title={Banach spaces which can be given an equivalent uniformly convex
   norm},
   booktitle={Proceedings of the International Symposium on Partial
   Differential Equations and the Geometry of Normed Linear Spaces
   (Jerusalem, 1972)},
   journal={Israel J. Math.},
   volume={13},
   date={1972},
   pages={281--288 (1973)},
}

\bib{Garling1968}{article}{
   author={Garling, D. J. H.},
   title={Symmetric bases of locally convex spaces},
   journal={Studia Math.},
   volume={30},
   date={1968},
   pages={163--181},
}

\bib{KadetsPel1962}{article}{
 author={Kadec, M.I.},
 author={Pe{\l}czy{\'n}ski, A.},
 title={Bases, lacunary sequences and complemented subspaces in the spaces
 $L_{p}$},
 journal={Studia Math.},
 volume={21},
 date={1961/1962},
 pages={161--176},
}

\bib{LinTza1977}{book}{
 author={Lindenstrauss, J.},
 author={Tzafriri, L.},
 title={Classical Banach spaces. I},
 note={Sequence spaces;
 Ergebnisse der Mathematik und ihrer Grenzgebiete, Vol. 92},
 publisher={Springer-Verlag, Berlin-New York},
 date={1977},
 pages={xiii+188},
}

\bib{LinTza1979}{book}{
   author={Lindenstrauss, J.},
   author={Tzafriri, L.},
   title={Classical Banach spaces. II},
   series={Ergebnisse der Mathematik und ihrer Grenzgebiete [Results in
   Mathematics and Related Areas]},
   volume={97},
   note={Function spaces},
   publisher={Springer-Verlag, Berlin-New York},
   date={1979},
   pages={x+243},
}

\bib{MP1976}{article}{
 author={Maurey, B.},
 author={Pisier, G.},
 title={S\'eries de variables al\'eatoires vectorielles ind\'ependantes et
 propri\'et\'es g\'eom\'etriques des espaces de Banach},
 language={French},
 journal={Studia Math.},
 volume={58},
 date={1976},
 number={1},
 pages={45--90},
}
	
 \bib{Temlyakov1998}{article}{
   author={Temlyakov, V. N.},
   title={The best $m$-term approximation and greedy algorithms},
   journal={Adv. Comput. Math.},
   volume={8},
   date={1998},
   number={3},
   pages={249--265},
}

\end{biblist}
\end{bibsection}

\end{document}